\newcommand{\bburl}[1]{\textcolor{blue}{\url{#1}}}
\newcommand{\twocase}[5]{#1 \begin{cases} #2 & \text{{\rm #3}}\\ #4
&\text{{\rm #5}} \end{cases}  }
\DeclareMathOperator{\Tr}{Tr}
\DeclareMathOperator{\End}{End}
\newcommand{\Ha}{\mathbb{H}}
\newcommand{\fF}{\mathcal{F}}
\newcommand{\EE}{\mathcal{E}}
\newcommand{\nn}{\nonumber}
\DeclareMathOperator{\GOE}{GOE}
\DeclareMathOperator{\GUE}{GUE}
\DeclareMathOperator{\GSE}{GSE}
\newcommand{\hf}{{}_0F_1}
\newcommand{\foh}{\frac{1}{2}}
\newcommand{\fof}{\frac{1}{4}}
\newtheorem{thm}{Theorem}[section]
\newtheorem{lem}[thm]{Lemma}
\numberwithin{equation}{section}
\newtheorem{defn}[thm]{Definition}
\newcommand\be{\begin{equation}}
\newcommand\ee{\end{equation}}
\newcommand\bea{\begin{eqnarray}}
\newcommand\eea{\end{eqnarray}}
\newcommand\bi{\begin{itemize}}
\newcommand\ei{\end{itemize}}
\newcommand\ben{\begin{enumerate}}
\newcommand\een{\end{enumerate}}
\renewcommand{\leq}{\leqslant}
\renewcommand{\mod}[1]{
	{\ifmmode\text{\rm\ (mod~$#1$)}\else
		\discretionary{}{}{\hbox{ }}\rm(mod~$#1$)\fi}
}
\newcommand{\R}{\ensuremath{\mathbb{R}}}
\newcommand{\C}{\ensuremath{\mathbb{C}}}
\newcommand{\E}{\ensuremath{\mathbb{E}}}
\begin{document}

\title{Spherical Matrix Ensembles}

\author{Gene S. Kopp}
\address{
Department of Mathematics,
University of Michigan,
Ann Arbor, MI,
USA}
\email{\textcolor{blue}{\href{mailto:gkopp@umich.edu}{gkopp@umich.edu}}}

\author{Steven J. Miller}
\address{
Department of Mathematics and Statistics,
Williams College,
Williamstown, MA,
USA
}
\email{\textcolor{blue}{\href{mailto:sjm1@williams.edu, Steven.Miller.MC.96@aya.yale.edu}{sjm1@williams.edu, Steven.Miller.MC.96@aya.yale.edu}}}

\begin{abstract}
The spherical orthogonal, unitary, and symplectic ensembles (SOE/SUE/SSE) $S_\beta(N,r)$ consist of $N \times N$ real symmetric, complex hermitian, and quaternionic self-adjoint matrices of Frobenius norm $r$, made into a probability space with the uniform measure on the sphere. For each of these ensembles, we determine the joint eigenvalue distribution for each $N$, and we prove the empirical spectral measures rapidly converge to the semicircular distribution as $N \to \infty$.  In the unitary case ($\beta=2$), we also find an explicit formula for the empirical spectral density for each $N$.
\end{abstract}

\subjclass[2010]{15B52 (primary), 60F05 (secondary).}

\keywords{Random matrix ensembles, Wigner's semicircle law, empirical spectral measure, spherical ensembles, Bessel functions}

\date{\today}

\thanks{Portions of this work were completed at the 2010 SMALL REU at Williams College; we thank our colleagues there, especially Murat Kolo$\breve{{\rm g}}$lu, and participants at the ICM Satellite Meeting in Probability \& Stochastic Processes in Bangalore, India, especially Arup Bose, for helpful conversations. The first named author was partially supported by NSF Grant DMS0850577, and the second named author was partially supported by NSF grants DMS0970067 and DMS1265673.}

\maketitle

\thispagestyle{empty}

\tableofcontents

\section{Introduction}

\noindent \texttt{After writing this paper, it was brought to our attention that the our spherical ensembles have been studied previously by physicists Ca\"{e}r and Delannay, under the name of \textit{fixed trace ensembles} \cite{CD, DC}.   Delannay and Ca\"{e}r prove results equivalent to our Theorem 1.4 (on\\ moments of spherical ensembles) for $\beta = 1, 2$ in Appendix B of \cite{DC}, and our Theorem 1.5 (on empirical spectral density) in Section 3.1 of \cite{DC}.  Our methods differ from those of Delannay and Ca\"{e}r: We use the method of moments, and they use the Laplace transform method.}

\ \\ \ \\

Random matrix theory has grown enormously in results and scope from Wishart's \cite{Wis} 1928 paper. Through the work of Wigner \cite{Wig1, Wig2, Wig3, Wig4, Wig5}, Dyson \cite{Dy1, Dy2}, and others, it now successfully models a variety of systems from the energy levels of heavy nuclei and zeros of $L$-functions \cite{Con,Por} to bus routes \cite{BBDS,KrSe}; see for example \cite{AGZ,For,Meh,MT-B} for detailed expositions of the general theory, and \cite{FM,Hay} for a review of some of the historical development and applications.

Much of the subject revolves around determining properties of the spectra of self-adjoint matrices. In this paper we concentrate on the density of states, though there has been significant progress in recent years on the spacings between normalized eigenvalues (see for example \cite{ERSY,ESY,TV1,TV2}). The limiting spectral measure has been computed for many ensembles of patterned or structured matrices, including band, circulant, Hankel and Toeplitz matrices, as well as adjacency matrices of $d$-regular graphs \cite{BBDS, BasBo1, BasBo2, BanBo, BCG, BHS1, BHS2, BM, BDJ, GKMN, HM, JMP, Kar, KKMSX, LW, McK, Me, Sch}. Many of these papers show that when a classical ensemble is modified by enforcing particular, usually linear, relations among the entries, new limiting behavior emerges.  We study \textit{spherical ensembles} introduced in \cite{MT-B} (see Research Project 16.4.7). While in general it is difficult to obtain closed-form expressions for spectral statistics of structured ensembles, we are able to exploit the symmetry of the spherical ensembles to give explicit formulas for the joint spectral distribution and empirical spectral density.  We prove a semicircular law and give numerical evidence of GUE spacing statistics.

As a set, the $N \times N$ \textbf{spherical unitary ensemble of radius $r$} consists of $N \times N$ Hermitian matrices
\be A\ = \ \left(
\begin{array}{cccc}
a_{11} & a_{12}+b_{12}\sqrt{-1} & \cdots & a_{1N}+b_{1N}\sqrt{-1} \\
a_{12}-b_{12}\sqrt{-1} & a_{22} & \cdots & a_{2N}+b_{2N}\sqrt{-1} \\
\vdots & \vdots & \ddots & \vdots \\
a_{1N}-b_{1N}\sqrt{-1} & a_{2N}-b_{2N}\sqrt{-1} & \cdots & a_{NN}
\end{array}
\right)\ee
satisfying the relation
\be
\sum_{i} a_{ii}^2 + 2\sum_{i<j} \left(a_{ij}^2+b_{ij}^2\right) \ = \  r^2,
\ee
or more succinctly, $\Tr(A^2)=r^2$.  This set of matrices becomes a probability space with the uniform measure on the (compact) ellipsoid defined by $\Tr(A^2)=r^2$. One interpretation of the name is that ${\rm Tr}(A^2) = r^2$ is a sphere in the Frobenius norm (see below).

The spherical ensemble may be understood to arise naturally from the problem of normalization.  Consider a Hermitian matrix $A$ chosen from a Gaussian unitary ensemble, with volume form $e^{-\Tr(A^2)/2}dA$ (up to a constant).  To study the limiting behavior as the dimension $N \to \infty$, it's standard to normalize the eigenvalues by a factor of $1/\sqrt{N}$.  With this normalization, the eigenvalue density converges to Wigner's semicircle,
\be
\twocase{f_{\rm \text{{\rm Wig}}}(x) \ := \ }{\frac{1}{2\pi} \sqrt{4 - x^2},}{if $|x| \le 1$,}{0,}{otherwise.}
\ee
One could instead normalize the matrices themselves before looking at the eigenvalues.  We can normalize a Hermitian matrix $A$ by sending it to $\frac{\sqrt{N}}{|A|}A$, where $|A|$ is the Frobenius norm (see \eqref{eq:frobeniusnorm}). This ``pre-normalized GUE'' is just the spherical unitary ensemble of radius $\sqrt{N}$.


We also consider spherical orthogonal and symplectic ensembles.  In all cases, we prove that the eigenvalue density converges to Wigner's semicircle as the dimension $N \to \infty$.  \emph{In the fixed-dimensional case, however, the empirical and joint spectral distributions exhibit new behavior.}  We provide explicit formulas and investigate their analytic properties.


\subsection{Definitions}

Let $V$ be a finite-dimensional real, complex, or quaternionic\footnote{A \textit{quaternionic vector space} is a free $(\Ha,\Ha)$-bimodule, where $\Ha$ denotes the Hamilton quaternions.  A finite-dimensional quaternionic vector space is necessarily isomorphic to some $\Ha^n$.} vector space with a Hermitian inner product $\langle -, - \rangle$.
For any $A \in \End(V)$, let $A^\ast \in \End(V)$ be the adjoint, so $\langle Av, w \rangle = \langle v, A^\ast w \rangle$.  The $\textbf{Frobenius inner product}$ on $\End(V)$ is defined by
\be
\langle A, B \rangle \ := \  \Tr(A B^\ast),
\ee
and induces the \textbf{Frobenius norm}
\be\label{eq:frobeniusnorm}
|A| \ := \  \sqrt{\langle A, A \rangle} \ = \  \sqrt{\Tr(A A^\ast)}.
\ee
When $A$ is written as a matrix (with respect to an orthonormal basis on $V$), the Frobenius norm of $A$ equals the square root of the sum of the norm squares of the entries of $A$.  If $A$ is self-adjoint, then $|A| = \sqrt{\Tr(A^2)}$ also equals the square root of the sum of the squares of the eigenvalues of $A$.

For our purposes, an $N \times N$ matrix ensemble is simply a subset of $M_N(K)$, where $K = \R, \C$ or $\Ha$, along with probability measure. For a matrix ensemble $\mathcal{E}$, the underlying set is also denoted by $\mathcal{E}$, and the associated volume form is denoted by $d_\mathcal{E}A$.  The ensembles we examine are all ensembles of self-adjoint matrices and thus have real eigenvalues, so we assume that in the discussion that follows.

If it exists, the \textbf{empirical spectral density} of the ensemble $\mathcal{E}$ is the unique (up to a measure zero set of values) real-valued function $f(x,\EE)$ such that
\be
\E_{A \in \EE}\left(\frac{1}{N}\sum_i g(\lambda_i(A))\right) \ = \  \int_{-\infty}^{\infty} g(x)f(x,\EE)\,dx
\ee
for all $g \in L^1(\R)$, where $\lambda_1(A) \leq \lambda_2(A) \leq \cdots \leq \lambda_N(A)$ are the eigenvalues of $A$.  The $k^{\rm th}$ moment of $f(x,\EE)$ is
\begin{align}
m_k(\mathcal{E}) \ :=   \E_{A \in \EE}\left(\frac{1}{N} \lambda_i(A)^k\right) \ = \  \frac{1}{N}\int_\mathcal{E} \Tr(A^k)d_\mathcal{E}A,
\end{align}
and its corresponding \textbf{characteristic function} is
\be
\phi(t,\EE) \ = \  \sum_{k=0}^\infty \frac{m_k(\mathcal{E})}{k!}(it)^k
\ee
and is related to the density by
\be
f(x,\EE) \ = \  \frac{1}{2\pi} \int_{-\infty}^{\infty} e^{-itx}\phi(t,\EE)\,dt.
\ee
Let $S_N$ be the symmetric group on $\{1,2,\ldots,N\}$.  The \textbf{joint spectral distribution} is the measure on the set $\R^N/S_N$ of $N$ unordered points induced by pushing forward the measure on $\EE$ along the map $A \mapsto \left(\lambda_1(A), \lambda_2(A), \ldots, \lambda_N(A)\right)$.   We consider it as a measure on $\R^N$ by  symmetrizing it and normalizing it to be a probability measure.

We can now carefully state the ensembles of interest in this paper.

\begin{defn}[Gaussian ensembles]
The \textbf{Gaussian orthogonal / unitary / symplectic ensembles (GOE/GUE/GSE)} are denoted $G_\beta(N,q)$ and consist of $N \times N$ real symmetric, complex Hermitian, and quaternionic self-adjoint matrices, made into a probability space with the density
\be
d_{G_\beta(N,q)}A \ := \  C_{\beta}(N,q)e^{-\beta q |A|^2/4}dA.
\ee
Here $\beta = 1, 2, 4$ for the $\GOE$, $\GUE$, and $\GSE$, respectively, $q$ is a positive real number, and $C_{\beta}(N,q)$ is chosen to give a probability density.
\end{defn}

\begin{defn}[Spherical ensembles]
The \textbf{spherical orthogonal / unitary / symplectic ensembles (SOE/SUE/SSE)} $S_\beta(N,r)$ consist of $N \times N$ real symmetric, complex hermitian, and quaternionic self-adjoint matrices of Frobenius norm $r$, made into a probability space with the uniform measure on the sphere.
\end{defn}

\subsection{Results}

For all spherical ensembles, we describe the joint eigenvalue distribution.
\begin{thm}\label{thm:joint} Let $\beta\in\{1,2,4\}$. The joint eigenvalue distribution for $S_\beta(N,r)$ (on vectors of eigenvalues $\lambda \in \R^N$)
is given by
\be\label{eq:formulajoint}
\rho(\lambda, S_\beta(N,r))d\lambda
\ = \  C''_\beta(N,r) |\Delta(\lambda)|^\beta \delta(r^2-|\lambda|^2)d\lambda.
\ee
Here, $C''_\beta(N,r)$ is a computable constant, $\delta$ is the Dirac delta, and
\be
\Delta(\lambda) \ = \  \prod_{1 \leq i < j \leq N} \left(\lambda_j - \lambda_i\right).
\ee
\end{thm}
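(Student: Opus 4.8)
The plan is to start from the well-known joint eigenvalue density for the Gaussian ensembles $G_\beta(N,q)$ and transfer it to the spherical ensembles by a disintegration/slicing argument with respect to the radial variable $|A|$. Recall that for $G_\beta(N,q)$ the Weyl integration formula gives a joint eigenvalue density proportional to $|\Delta(\lambda)|^\beta e^{-\beta q |\lambda|^2/4}$ on $\R^N$, where $|\lambda|^2 = \sum_i \lambda_i^2 = \Tr(A^2) = |A|^2$. The key structural observation is that this density is a product of a function of the eigenvalue \emph{directions} (the Vandermonde factor $|\Delta(\lambda)|^\beta$, which is homogeneous of degree $\beta\binom{N}{2}$) and a radial function depending only on $|\lambda|$. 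Writing $\lambda = r\omega$ with $\omega$ on the unit sphere $S^{d-1} \subset \R^N$ (here $d = N$), and using $\Delta(r\omega) = r^{\binom{N}{2}}\Delta(\omega)$, the Gaussian joint density in polar coordinates factors as $\big(C\, r^{\beta\binom{N}{2}+N-1} e^{-\beta q r^2/4}\big)\cdot\big(|\Delta(\omega)|^\beta\, d\sigma(\omega)\big)$, so the radial and angular parts are independent.

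Next I would identify the conditional law. Since $S_\beta(N,r)$ is by definition $G_\beta(N,q)$ conditioned on $|A| = r$ (with the uniform measure on that sphere in matrix space — and one should check this conditioning is independent of $q$, which is immediate from the factorization above, or alternatively argue directly from the $O(N)$/$U(N)$/$Sp(N)$-invariance and scaling symmetry of the sphere of matrices), the joint eigenvalue distribution of $S_\beta(N,r)$ is exactly the angular factor $|\Delta(\omega)|^\beta d\sigma(\omega)$ pushed onto the sphere of radius $r$. In the language of the statement, this is the measure on $\R^N$ supported on the sphere $\{|\lambda| = r\}$ with density proportional to $|\Delta(\lambda)|^\beta$ relative to the surface measure, which is precisely $C''_\beta(N,r)\,|\Delta(\lambda)|^\beta\,\delta(r^2 - |\lambda|^2)\,d\lambda$ once one accounts for the Jacobian relating the hypersurface measure $\delta(r^2-|\lambda|^2)d\lambda$ to $d\sigma$ (this contributes only an $r$-dependent constant, absorbed into $C''_\beta(N,r)$). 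The constant $C''_\beta(N,r)$ is then computable as the reciprocal of $\int_{\{|\lambda|=r\}} |\Delta(\lambda)|^\beta\,\delta(r^2-|\lambda|^2)\,d\lambda$, a Selberg-type integral.

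An alternative, more self-contained route — which I would at least sketch — avoids invoking the Gaussian case: directly apply the Weyl integration formula on the manifold of self-adjoint matrices of fixed Frobenius norm. The diagonalization map $A = U\Lambda U^\ast$ has Jacobian proportional to $|\Delta(\lambda)|^\beta$ (this is the standard computation underlying all $\beta$-ensembles), and the constraint $|A|^2 = r^2$ becomes $|\lambda|^2 = r^2$ since the Frobenius norm is conjugation-invariant; integrating out the $U$-variable (the flag manifold) against the uniform measure contributes only a $\lambda$-independent constant. I expect the main obstacle to be the careful bookkeeping of constants and Jacobians — in particular making precise the meaning of the $\delta$-function density on $\R^N$ versus the intrinsic surface measure, and verifying that the $q$-independence of the conditional measure genuinely matches the "uniform measure on the sphere" in the definition of $S_\beta(N,r)$ rather than some other natural measure. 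The conceptual content (the $|\Delta(\lambda)|^\beta$ factor) is standard; the work is in the normalization and in handling the quaternionic case $\beta = 4$ with the same uniformity.
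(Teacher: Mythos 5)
Your proposal is correct and follows essentially the same route as the paper: both transfer the known Gaussian joint density $C'_\beta(N,q)e^{-\beta q|\lambda|^2/4}|\Delta(\lambda)|^\beta$ to the sphere (the paper via the measure-preserving radial projection $A \mapsto \frac{r}{|A|}A$ of Lemma \ref{lem:map}, you via the equivalent observation that conditioning a norm-radial density on $|A|=r$ gives the uniform spherical measure), and both then use polar coordinates together with the homogeneity of $\Delta$ to factor out the radial integral into the constant $C''_\beta(N,r)$. The $q$-independence check and the $\delta$-versus-surface-measure bookkeeping you flag are handled in the paper exactly as you anticipate.
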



The formula in \eqref{eq:formulajoint} can be used---at least theoretically---to describe any statistic for these ensembles, such as spacing density, largest spacing, $n$-level correlation, $n$-level density, et. cetera.  However, this is easier said than done.  The analysis even to compute the marginals
\begin{equation}
\rho^{(m)}((\lambda_1,\ldots,\lambda_m),S_\beta(N,r))\ :=\ \int_{-\infty}^\infty \cdots \int_{-\infty}^\infty \rho\left(\lambda,S_\beta(N,r)\right)\, d\lambda_{m+1} \cdots d\lambda_N
\end{equation}
was too cumbersome.  The remainder of our results concern the empirical spectral density
\begin{equation}
f\left(x,S_\beta(N,r)\right)\ =\ \rho^{(1)}((x),S_\beta(N,r))
\end{equation}
and its moments.

For all spherical ensembles, we are able to express the moments in terms of the moments of the corresponding Gaussian ensembles, which allows us to prove a semicircular law in the limit. Explicitly,

\begin{thm}\label{thm:momentsandsemicircle} Let $\beta \in \{1,2,4\}$, and let $q, r > 0$. The $k^{\rm th}$ moment of a spherical ensemble is related to the $k^{\rm th}$ moment of a Gaussian ensemble by
\be\label{eq:momentsandsemicircletheoremmomentformula}
m_k(S_\beta(N,r))
\ = \
\left(\frac{\beta}{4} q r^2\right)^{\frac{k}2} \frac{\Gamma\left(\frac{n}{2}\right)}{\Gamma\left(\frac{k+n}{2}\right)}
m_k(G_\beta(N,q)),
\ee
where $n = \dim_\R G_\beta(N,q) = \beta {N \choose 2} + N$.  Specializing to $r = \sqrt{N}$, the eigenvalue density of $S_\beta(N,\sqrt{N})$ has mean zero and variance one.  The density functions $f(x, S_\beta(N,\sqrt{N}))$ converge in measure to $f_{\rm \text{{\rm Wig}}}(x)$.
\end{thm}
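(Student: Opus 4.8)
The key structural fact is that $G_\beta(N,q)$ is a \emph{rotationally invariant} Gaussian on the $n$-dimensional real inner product space $M$ of $N\times N$ self-adjoint matrices over $K$ — where $n=\beta\binom N2+N$ and the inner product is the Frobenius one — while $S_\beta(N,r)$ is exactly the uniform measure on the radius-$r$ sphere of $M$, i.e.\ the ``angular part'' of $G_\beta(N,q)$ on that sphere (the existence of a spectral density $f(x,S_\beta(N,r))$ comes from Theorem~\ref{thm:joint}; for moments this is not needed, as $m_k(S_\beta(N,r))=\frac1N\int\Tr(A^k)$ is defined directly). The plan is: (i) disintegrate $G_\beta(N,q)$ in polar coordinates to get \eqref{eq:momentsandsemicircletheoremmomentformula}; (ii) read off mean and variance at $r=\sqrt N$; (iii) push the classical Gaussian semicircle law through \eqref{eq:momentsandsemicircletheoremmomentformula} and apply the method of moments.

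For (i): write $A=\rho U$ with $\rho=|A|\in(0,\infty)$ and $U=A/|A|$ on the unit sphere of $M$. The polar disintegration of Lebesgue measure on $M\cong\R^n$ shows that if $A\sim G_\beta(N,q)$ then $\rho$ and $U$ are independent, $U\sim S_\beta(N,1)$, and $\rho$ has density proportional to $\rho^{n-1}e^{-\beta q\rho^2/4}$. Since $\Tr(A^k)=\rho^k\Tr(U^k)$ is homogeneous of degree $k$, taking expectations gives $m_k(G_\beta(N,q))=\E[\rho^k]\,m_k(S_\beta(N,1))$, and the scaling bijection $A\mapsto A/r$ from $S_\beta(N,r)$ to $S_\beta(N,1)$ gives $m_k(S_\beta(N,r))=r^k m_k(S_\beta(N,1))$. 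Evaluating $\E[\rho^k]$ via the substitution $t=\tfrac{\beta q}{4}\rho^2$ turns numerator and denominator into Gamma integrals and yields $\E[\rho^k]=(4/(\beta q))^{k/2}\,\Gamma((n+k)/2)/\Gamma(n/2)$; combining the three displays is exactly \eqref{eq:momentsandsemicircletheoremmomentformula}.

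For (ii): the first moment vanishes since $S_\beta(N,r)$ is invariant under $A\mapsto-A$ (so all odd moments are $0$), and $m_2=\tfrac1N\E[\Tr(A^2)]=\tfrac1N\cdot r^2=1$ because $\Tr(A^2)=|A|^2=r^2=N$ identically on $S_\beta(N,\sqrt N)$ (equivalently, specialize \eqref{eq:momentsandsemicircletheoremmomentformula}). For (iii): the left side of \eqref{eq:momentsandsemicircletheoremmomentformula} does not involve $q$, so using $m_k(G_\beta(N,q))=q^{-k/2}m_k(G_\beta(N,1))$ we may rewrite it at $r=\sqrt N$ as
\[
m_k\bigl(S_\beta(N,\sqrt N)\bigr)=\Bigl(\tfrac{\beta N}{4}\Bigr)^{k/2}\,\frac{\Gamma(n/2)}{\Gamma\bigl((n+k)/2\bigr)}\;m_k\bigl(G_\beta(N,1)\bigr).
\]
The classical Wigner semicircle law for $\GOE/\GUE/\GSE$ gives $N^{-k/2}m_k(G_\beta(N,1))\to m_k(f_{\rm Wig})$ (the Catalan number $C_{k/2}$ for $k$ even, and $0$ for $k$ odd). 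Since $n=\beta\binom N2+N\sim\tfrac{\beta N^2}{2}$ and $\Gamma(n/2)/\Gamma((n+k)/2)=(n/2)^{-k/2}(1+o(1))\sim(\beta N^2/4)^{-k/2}$, the prefactor $(\beta N/4)^{k/2}\cdot N^{k/2}\cdot\Gamma(n/2)/\Gamma((n+k)/2)$ tends to $1$, so $m_k(S_\beta(N,\sqrt N))\to m_k(f_{\rm Wig})$ for every fixed $k$. As $f_{\rm Wig}$ is compactly supported, hence moment-determinate, the method of moments yields weak convergence of $f(x,S_\beta(N,\sqrt N))\,dx$ to $f_{\rm Wig}(x)\,dx$, which is the asserted convergence.

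The main obstacle is bookkeeping rather than depth. One must be sure the polar disintegration is taken with respect to the Frobenius inner product (the one for which $G_\beta(N,q)$ is genuinely rotation invariant and whose spheres define $S_\beta(N,r)$), and one must combine the $o(1)$ from the Gaussian semicircle law with the Gamma-ratio asymptotics correctly — a product of separately convergent factors, so valid, but the cancellation of the powers of $\beta N^2/4$ should be verified. Treating the $\GOE/\GUE/\GSE$ semicircle law as a cited black box is what keeps the argument short; reproving it (via Wick's theorem and counting of non-crossing pairings) would be the bulk of a self-contained version.
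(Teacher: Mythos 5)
Your proposal is correct and follows essentially the same route as the paper: the polar disintegration of the Gaussian into independent radial and angular parts is exactly what the paper's Lemma \ref{lem:map} and Lemma \ref{lem:integration} accomplish, your $\E[\rho^k]$ computation reproduces the same Gamma-integral ratio, and the limit is handled identically via $\Gamma(z+c)/\Gamma(z)\sim z^c$ and the method of moments. The only (harmless) divergence is cosmetic: you get the variance directly from $\Tr(A^2)=N$ holding identically on the sphere, where the paper instead cancels it against the Gaussian variance formula $1+(\tfrac2\beta-1)\tfrac1N$, and you normalize with $G_\beta(N,1)$ rather than $G_\beta(N,N)$.
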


In the Hermitian case, we are able to give explicit formulas for the characteristic function and density function.

\begin{thm}\label{thm:characteristicdensity}
Let $J_\alpha$ be a Bessel function of the first kind (see chapter 9 of \cite{AS}).  The characteristic function equals
\be
\phi(t,S_2(N,r)) \ = \  \frac{\Gamma(\frac{N^2}{2})}{N} \left(\frac{2}{rt}\right)^{\frac{N^2}{2}-1} \sum_{j=0}^{N-1} \frac{1}{j!} {N \choose j+1} J_{\frac{N^2}{2}+j-1}(rt) (-rt)^j,
\ee
and the spectral density is \begin{align}
f\left(x,S_2(N,r)\right)
&\ = \  \frac{1}{r^{N^2-2}N}\sum_{j=0}^{N-1} {N \choose j+1} \frac{\Gamma\left(\frac{N^2}{2}\right)}{2^{j}\sqrt{\pi}\Gamma\left(\frac{N^2-1}{2}+j\right) j!}\left(\frac{d}{dx}\right)^{2j}\left(r^2-x^2\right)^{\frac{N^2-3}{2}+j}\nn\\
&\ = \ \frac{1}{r}p_N\left(\frac{x}{r}\right)\left(1-\left(\frac{x}{r}\right)^2\right)^{\frac{1}{2}(N^2-2N-1)}
\end{align}
for a particular polynomial $p_N$.
\end{thm}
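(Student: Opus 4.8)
The plan is to derive both formulas from the joint eigenvalue density of Theorem~\ref{thm:joint} with $\beta=2$, namely $\rho(\lambda,S_2(N,r))\propto|\Delta(\lambda)|^2\,\delta(r^2-|\lambda|^2)$, by integrating out all but one eigenvalue. First I would write the empirical spectral density as the one-point marginal
\[
f\bigl(x,S_2(N,r)\bigr)\ =\ C\int_{\R^{N-1}}\prod_{j=2}^N(x-\lambda_j)^2\,|\Delta(\lambda_2,\dots,\lambda_N)|^2\,\delta\!\Bigl(r^2-x^2-\textstyle\sum_{j\ge2}\lambda_j^2\Bigr)\,d\lambda_2\cdots d\lambda_N,
\]
using the factorization $|\Delta(x,\lambda_2,\dots,\lambda_N)|^2=\prod_{j\ge2}(x-\lambda_j)^2\,|\Delta(\lambda_2,\dots,\lambda_N)|^2$ of the Vandermonde. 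The key device is to replace the Dirac delta by its Fourier representation $\delta(u)=\frac1{2\pi}\int_{\R}e^{isu}\,ds$ (regularized as $s\to s-i\varepsilon$), which turns the remaining $(N-1)$-dimensional integral into a \emph{Gaussian} integral with complex weight $e^{-is\sum_j\lambda_j^2}$. That inner integral is, up to the Gaussian partition function $Z_{N-1}(2is)$, exactly the averaged squared characteristic polynomial $\E_{G_2(N-1,\,2is)}\bigl[\det(xI-M)^2\bigr]$ of an $(N-1)\times(N-1)$ Gaussian unitary ensemble.

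Next I would evaluate that averaged characteristic polynomial by the classical orthogonal-polynomial computation: writing $|\Delta(x,\lambda')|^2$ as a Cauchy--Binet determinant of the monic (rescaled Hermite) polynomials $\pi_k$ orthogonal for the weight $e^{-isu^2}$ and applying the Gaudin--Mehta reduction lemma gives $\E[\det(xI-M)^2]=N\,\pi_{N-1}(x)^2-(N-1)\,\pi_{N-2}(x)\,\pi_N(x)$, which by Christoffel--Darboux equals $\pi_N'(x)\pi_{N-1}(x)-\pi_{N-1}'(x)\pi_N(x)$. Since each $\pi_k(x)$ is a scaled Hermite polynomial in $\sqrt{is}\,x$, this bracket is a polynomial in $x^2$ of degree $2(N-1)$ whose coefficients are explicit powers of $is$ times explicit Hermite coefficients. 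Collecting everything, $f(x,S_2(N,r))$ becomes $\frac1{2\pi}\int_{\R}e^{is(r^2-x^2)}\bigl(\text{polynomial in }x^2\text{ and }(is)^{-1}\bigr)\,ds$, and each term $\frac1{2\pi}\int_{\R}e^{is\tau}(is)^{-\nu}\,ds=\tau_+^{\nu-1}/\Gamma(\nu)$ produces a power $(r^2-x^2)_+^{(N^2-3)/2-m}$. This already exhibits $f$ as a finite sum $\sum_{m=0}^{N-1}c_m\,x^{2m}\,(r^2-x^2)^{(N^2-3)/2-m}$, supported on $[-r,r]$.

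The remaining step is cosmetic but is the real labor: re-expressing this sum in the Rodrigues form $\sum_{j=0}^{N-1}\binom{N}{j+1}(\cdots)\bigl(\tfrac{d}{dx}\bigr)^{2j}(r^2-x^2)^{(N^2-3)/2+j}$. This rests on the identity $\bigl(\tfrac{d}{dx}\bigr)^{2j}(r^2-x^2)^{\alpha+j}=(\text{const})\,(r^2-x^2)^{\alpha-j}\,C_{2j}^{(\alpha-j+1/2)}(x/r)$ (the Rodrigues formula for Gegenbauer polynomials) together with a coefficient comparison in the spanning set $\{x^{2m}(r^2-x^2)^{\alpha-m}\}$; the point is that the Hermite constants from the previous step collapse to the single binomial coefficient $\binom{N}{j+1}$ and the stated gamma-function prefactor. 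The second displayed form, with the polynomial $p_N$, then follows by factoring out the common power $(1-(x/r)^2)^{(N^2-2N-1)/2}$, which is the exponent realized by the $j=N-1$ term.

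Finally, the characteristic function is the Fourier transform of the density: applying the Poisson--Bessel formula $\int_{-r}^r e^{itx}(r^2-x^2)^\nu\,dx=\sqrt\pi\,\Gamma(\nu+1)\,r^{2\nu+1}\,(2/(rt))^{\nu+1/2}\,J_{\nu+1/2}(rt)$ term by term, and using that the Fourier transform sends $\bigl(\tfrac{d}{dx}\bigr)^{2j}$ to $(-t^2)^j$, converts each summand into $J_{N^2/2+j-1}(rt)$ times a power of $-rt$, which is the stated formula for $\phi(t,S_2(N,r))$; that $\phi(0,S_2(N,r))=1$ is a convenient consistency check. I expect the main obstacle to be the bookkeeping in the repackaging step — producing exactly $\binom{N}{j+1}$ and the gamma prefactors — since it requires the explicit coefficients of $H_{N-1}^2$ and $H_{N-2}H_N$ to conspire correctly; a cleaner, if less illuminating, alternative is to take the closed form as an ansatz and verify that it reproduces the moments $m_k(S_2(N,r))$ forced by Theorem~\ref{thm:momentsandsemicircle}, which pin down the density uniquely.
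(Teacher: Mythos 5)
Your strategy is sound, but it is a genuinely different route from the paper's. You work forward from the joint density of Theorem \ref{thm:joint}: integrate out $N-1$ eigenvalues, represent the delta constraint by its Fourier integral, recognize the inner integral as the GUE average of a squared characteristic polynomial, evaluate it by Christoffel--Darboux, and recover the powers $(r^2-x^2)^{(N^2-3)/2-m}$ from the $s$-integral; the characteristic function then comes last, by Fourier-transforming the density with the Poisson--Bessel integral. This is essentially the Laplace-transform/fixed-trace method of Delannay and Le Ca\"er \cite{DC}, which the introduction explicitly contrasts with the paper's approach. The paper instead goes in the opposite direction via the method of moments: it feeds the Harer--Zagier formula for $m_{2\ell}(G_2(N,q))$ into the moment identity \eqref{eq:momentsandsemicircletheoremmomentformula}, recognizes the resulting moment generating function as a Hadamard product of ${}_0F_1(\tfrac{n}{2},\cdot)$ with the rational function $\frac{1}{2z}\bigl(\bigl(\tfrac{1+z}{1-z}\bigr)^N-1\bigr)$, and extracts the finite Bessel sum by a single residue computation at $z=1$; the density is then obtained by Fourier inversion using an ODE-based lemma for the transform of ${}_0F_1(\alpha,-\tfrac14 t^2)$. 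The practical difference is exactly where you locate the labor: in the paper's route the coefficient $\binom{N}{j+1}$ and the gamma prefactors fall out automatically from the Laurent expansion of $\bigl(1+\tfrac{2}{z-1}\bigr)^N$, with no orthogonal-polynomial bookkeeping at all, whereas your route requires the Hermite coefficients of $N\pi_{N-1}^2-(N-1)\pi_{N-2}\pi_N$ to conspire into that binomial coefficient --- the one step you leave unverified. Your fallback of checking the closed form against the moments of Theorem \ref{thm:momentsandsemicircle} is legitimate (a compactly supported density is determined by its moments), but as written the repackaging into the Rodrigues/Gegenbauer form is a plan rather than a proof, so you should either carry out that coefficient comparison or execute the moment verification explicitly.
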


The paper is organized as follows. We derive some necessary lemmas in \S\ref{sec:preliminary}.  We prove Theorem \ref{thm:joint} in \S\ref{sec:proofoftheoremjoint}, Theorem \ref{thm:momentsandsemicircle} in \S\ref{sec:proofofomentsandsemicircle}, and Theorem \ref{thm:characteristicdensity} in \S\ref{sec:proofoftheoremcharacteristicdensity}.  We conclude by discussing spacing statistics in \S\ref{sec:spacings}.

\section{Preliminary Lemmas}\label{sec:preliminary}
\noindent
We calculate the joint and empirical spectral distributions of the spherical ensembles by relating them to the Gaussian ensembles, whose spectral distributions are known explicitly.  

\begin{lem}\label{lem:map}
The map\footnote{While this map is not defined at the zero matrix, this is immaterial for our investigations below since the zero matrix is a single point of mass $0$.} of measure spaces $A \mapsto \frac{r}{|A|} A$ defines a measure-preserving map from $G_\beta(N,q)$ to $S_\beta(N,r)$. In other words, if $f \in L^1(S_\beta(N,r))$, then
\be\label{eq:SbetaNrtoGbetaNq}
\int_{S_\beta(N,r)} f(A)d_{S_\beta(N,r)}A \ = \  \int_{G_\beta(N,q)} f\left(\frac{r}{|A|} A\right)d_{G_\beta(N,q)}A.
\ee
\end{lem}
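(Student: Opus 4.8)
The plan is to decompose the Gaussian measure into ``radial'' and ``angular'' parts using polar coordinates on the vector space $G_\beta(N,q) \cong \R^n$, where $n = \beta\binom{N}{2} + N$, and to observe that the map $A \mapsto \frac{r}{|A|}A$ is precisely the radial projection onto the sphere of radius $r$. First I would note that, since $G_\beta(N,q)$ is a finite-dimensional real inner product space under the Frobenius inner product, the Gaussian density $C_\beta(N,q)e^{-\beta q|A|^2/4}\,dA$ is rotationally symmetric: it depends on $A$ only through $|A|$. Writing a point of $G_\beta(N,q) \setminus \{0\}$ as $A = \rho\,\Omega$ with $\rho = |A| \in (0,\infty)$ and $\Omega$ on the unit Frobenius sphere $\Sigma = \{|A| = 1\}$, Lebesgue measure factors as $dA = \rho^{n-1}\,d\rho\,d\sigma(\Omega)$, where $d\sigma$ is the (unnormalized) surface measure on $\Sigma$. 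Hence for $f \in L^1(S_\beta(N,r))$,
\be
\int_{G_\beta(N,q)} f\!\left(\tfrac{r}{|A|}A\right) d_{G_\beta(N,q)}A
\ = \ C_\beta(N,q)\left(\int_0^\infty \rho^{n-1} e^{-\beta q\rho^2/4}\,d\rho\right)\int_\Sigma f(r\Omega)\,d\sigma(\Omega),
\ee
because $f(\frac{r}{|A|}A) = f(r\Omega)$ depends only on the angular variable $\Omega$.

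Next I would identify the right-hand side with $\int_{S_\beta(N,r)} f\,d_{S_\beta(N,r)}$. By definition the spherical ensemble carries the uniform (normalized) measure on the sphere of radius $r$, which is the surface measure $d\sigma_r$ on $\{|A| = r\}$ divided by its total mass; under the scaling $\Omega \mapsto r\Omega$ this is the pushforward of $d\sigma/\sigma(\Sigma)$ from $\Sigma$. Therefore $\int_\Sigma f(r\Omega)\,d\sigma(\Omega) = \sigma(\Sigma)\int_{S_\beta(N,r)} f(A)\,d_{S_\beta(N,r)}A$. Substituting this in, \eqref{eq:SbetaNrtoGbetaNq} holds provided the scalar prefactor satisfies $C_\beta(N,q)\,\sigma(\Sigma)\int_0^\infty \rho^{n-1}e^{-\beta q\rho^2/4}\,d\rho = 1$; but this is exactly the statement that $C_\beta(N,q)$ was chosen to normalize $d_{G_\beta(N,q)}A$ to a probability measure (apply the displayed identity with $f \equiv 1$, for which both the spherical measure and the left side integrate to $1$). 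So no separate constant computation is needed: the normalization of the Gaussian ensemble forces the identity. The claim that the map is measure-preserving is the assertion that \eqref{eq:SbetaNrtoGbetaNq} holds for all such $f$, which is what we have shown; the value $q > 0$ is irrelevant to the conclusion since it only affects the radial integral, which gets absorbed into $C_\beta(N,q)$.

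The only genuinely delicate point is the justification of the polar-coordinate factorization of Lebesgue measure on the inner product space $G_\beta(N,q)$, i.e. the coarea/Fubini statement $dA = \rho^{n-1}\,d\rho\,d\sigma$ together with the identification of the induced surface measure on each sphere with the uniform measure defining $S_\beta(N,r)$; once one is careful that ``uniform measure on the sphere'' in the definition of the spherical ensemble means normalized Riemannian surface measure, everything else is bookkeeping. I would also remark, as the paper's footnote does, that excising the zero matrix (a single point, hence $dA$-null and not in the image-sphere) causes no trouble, and that $f$ extended arbitrarily off the sphere does not matter since only its values on $\{|A| = r\}$ enter either side.
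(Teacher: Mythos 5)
Your proof is correct and follows essentially the same route as the paper's: the paper's two-sentence argument (the Gaussian density depends only on $|A|$, so collapsing rays onto the sphere yields the uniform measure) is exactly what your polar-coordinate factorization $dA = \rho^{n-1}\,d\rho\,d\sigma$ makes precise. Your observation that the normalization constant needs no separate computation --- it is forced by taking $f \equiv 1$ --- is a nice touch that the paper leaves implicit.
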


\begin{proof}
Because the density function of a Gaussian ensemble depends only on the Frobenius norm, the mass of each ray from the origin is the same.  Collapsing each such ray to the point on the Frobenius sphere it intersects gives the uniform distribution on the sphere.
\end{proof}

The next lemma will be needed
in the proof of Theorem \ref{thm:momentsandsemicircle}.

\begin{lem}\label{lem:integration} Let $G$ be a degree $k$ homogeneous polynomial in $n$ variables with complex coefficients, $B$ a positive definite symmetric matrix, $|\cdot|$ the standard $2$-norm on $\R^n$, $\sqrt{\vec{x}^{\top} B \vec{x}}$ the norm induced by $B$, and $\alpha > 0$. Then
\be
\int_{\R^n} \frac{G(\vec{x})}{(\vec{x}^{\top} B \vec{x})^{k/2}} e^{-\alpha(\vec{x}^{\top} B \vec{x})/4}d\vec{x} \ = \  \left(\frac{\alpha}{4}\right)^{\frac{k}{2}}\frac{\Gamma\left(\frac{n}{2}\right)}{\Gamma\left(\frac{k+n}{2}\right)} \int_{\R^n} G(\vec{x}) e^{-\alpha \vec{x}^{\top} B \vec{x}/4}d\vec{x}.
\ee
\end{lem}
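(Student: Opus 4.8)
The plan is to reduce to the isotropic case $B = I_n$ by a linear change of variables, and then evaluate both integrals in polar coordinates, where the homogeneity of $G$ does essentially all the work.

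First I would substitute $\vec{y} = B^{1/2}\vec{x}$, using that the positive definite symmetric matrix $B$ has a positive definite symmetric square root. Then $\vec{x}^{\top} B \vec{x} = |\vec{y}|^2$, $d\vec{x} = (\det B)^{-1/2}\,d\vec{y}$, and $\widetilde{G}(\vec{y}) := G(B^{-1/2}\vec{y})$ is again a degree $k$ homogeneous polynomial with complex coefficients, being the composition of such a polynomial with a linear map. Both sides of the asserted identity then acquire the common factor $(\det B)^{-1/2}$, which cancels, so it suffices to treat the case $B = I_n$, i.e. to prove
\be
\int_{\R^n} \frac{\widetilde{G}(\vec{y})}{|\vec{y}|^{k}}\, e^{-\alpha|\vec{y}|^2/4}\,d\vec{y} \ = \ \left(\frac{\alpha}{4}\right)^{k/2}\frac{\Gamma(n/2)}{\Gamma((k+n)/2)} \int_{\R^n} \widetilde{G}(\vec{y})\, e^{-\alpha|\vec{y}|^2/4}\,d\vec{y}.
\ee

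Next I would pass to polar coordinates $\vec{y} = \rho\omega$, with $\rho = |\vec{y}| \ge 0$ and $\omega \in S^{n-1}$, so $d\vec{y} = \rho^{n-1}\,d\rho\,d\sigma(\omega)$ for the surface measure $d\sigma$ on the unit sphere. Homogeneity gives $\widetilde{G}(\rho\omega) = \rho^k \widetilde{G}(\omega)$, so $\widetilde{G}(\vec{y})/|\vec{y}|^{k} = \widetilde{G}(\omega)$; since $\widetilde{G}$ is continuous hence bounded on the compact sphere and the radial Gaussian integrals converge absolutely, Fubini applies and both integrals factor through the angular integral $\int_{S^{n-1}}\widetilde{G}(\omega)\,d\sigma(\omega)$ times a one–dimensional radial integral. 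Using $\int_0^\infty \rho^{m-1}e^{-\alpha\rho^2/4}\,d\rho = 2^{m-1}\alpha^{-m/2}\,\Gamma(m/2)$ (substitute $u = \alpha\rho^2/4$), the left side gets the radial factor with $m = n$ and the right side with $m = n+k$. Hence both sides equal $\big(\int_{S^{n-1}}\widetilde{G}(\omega)\,d\sigma(\omega)\big)$ times these explicit constants, and the ratio of the constants is
\be
\frac{2^{n-1}\alpha^{-n/2}\,\Gamma(n/2)}{2^{n+k-1}\alpha^{-(n+k)/2}\,\Gamma((n+k)/2)} \ = \ \left(\frac{\alpha}{4}\right)^{k/2}\frac{\Gamma(n/2)}{\Gamma((k+n)/2)},
\ee
which is exactly the claimed factor (and when the angular integral vanishes, both sides are $0$ and the identity is trivial).

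I do not expect a real obstacle: once the reduction to $B = I_n$ is in place this is a two-line polar-coordinate computation. The only points needing care are checking that $\widetilde{G}$ stays homogeneous of degree $k$ under the substitution — so that dividing by $|\vec{y}|^k$ precisely strips off the radial dependence — and verifying absolute convergence (the singularity of $\widetilde{G}(\vec y)/|\vec y|^k$ at the origin is harmless, since after the polar Jacobian the radial integrand is $\widetilde G(\omega)\rho^{n-1}$, integrable near $\rho=0$). The complex coefficients cause no trouble, as everything is linear in $G$ and one may split into real and imaginary parts if desired.
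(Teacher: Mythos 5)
Your proof is correct and follows essentially the same route as the paper's: reduce to $B=I$ via a linear substitution (you use $B^{1/2}$, the paper uses $B=A^{\top}A$, an immaterial difference), then exploit homogeneity in polar coordinates so both sides factor through the same angular integral, with the ratio of radial Gamma integrals giving the constant. The only difference is the order of the two steps, which does not change the argument.
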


\begin{proof} We first do the special case when $B=I$. Denote by $d\vec{\theta}$ the volume element on $S^{n-1}$.
For any function $f : \R \to \R$ so that $G(\vec{x})f(|\vec{x}|)$ is integrable,
\begin{align}
\int_{\R^n} G(\vec{x})f(|\vec{x}|)d\vec{x}
&\ = \  \int_{S^{n-1}}\int_0^\infty G(r\vec{\theta})f(r)r^{n-1}drd\vec{\theta}\nn\\
&\ = \  \int_{S^{n-1}}\int_0^\infty G(\vec{\theta})f(r)r^{k+n-1}drd\vec{\theta}\nn\\
&\ = \  \left(\int_0^\infty f(r)r^{k+n-1}dr\right)\int_{S^{n-1}} G(\vec{\theta})d\vec{\theta}.
\end{align}
If $f(r) \ = \  e^{-\alpha r^2/4}r^\gamma$ for $\alpha > 0$, we have the $r$-integral is
\begin{align}
\int_0^\infty f(r)r^{k+n-1}dr
&\ = \  \int_0^\infty e^{-\alpha r^2/4} r^{\gamma +k+n-1}dr\nn\\
&\ = \  \int_0^\infty e^{-u}\sqrt{\frac{4u}{\alpha}}^{\gamma +k+n-1}\sqrt{\frac{1}{\alpha u}}du\nn\\
&\ = \  \frac{1}{2}\left(\frac{4}{\alpha}\right)^{\frac{\gamma +k+n}{2}}\Gamma\left(\frac{\gamma + k+n}{2}\right).
\end{align}
The cases $\gamma = -k$ and $\gamma = 0$ give
\begin{align}\label{eq:specialcaseBequalsI}
\int_{\R^n} \frac{G(\vec{x})}{|\vec{x}|^k} e^{-\alpha |\vec{x}|^2/4}d\vec{x} &\ = \  \frac{1}{2}\left(\frac{4}{\alpha}\right)^{\frac{n}{2}}\Gamma\left(\frac{n}{2}\right) \int_{S^{n-1}} G(\vec{\theta})d\vec{\theta},\nn\\
\int_{\R^n} G(\vec{x}) e^{-\alpha |\vec{x}|^2/4}d\vec{x}  &\ = \  \frac{1}{2}\left(\frac{4}{\alpha}\right)^{\frac{k+n}{2}}\Gamma\left(\frac{k+n}{2}\right)\int_{S^{n-1}} G(\vec{\theta})d\vec{\theta}.
\end{align}
Dividing these two equations gives the desired formula for the $B=I$ case.

The general case follows immediately from \eqref{eq:specialcaseBequalsI}. As $B$ is positive definite, we have $B = A^{\top}A$ for some full rank matrix $A$.  The linear substitution $\vec{y} = A\vec{x}$ yields the more general formula, as
\begin{align}
\int_{\R^n} \frac{G(\vec{x})}{\left(\vec{x}^{\top}B\vec{x}\right)^{k/2}} e^{-\alpha \left(\vec{x}^{\top}B\vec{x}\right)/4}d\vec{x}
&\ = \  \int_{\R^n} \frac{G(\vec{x})}{|A\vec{x}|^{k}} e^{-\alpha |A\vec{x}|^2/4}d\vec{x}\nn\\
&\ = \  \int_{\R^n} \frac{G\left(A^{-1}\vec{y}\right)}{|\vec{y}|^{k}} e^{-\alpha |\vec{y}|^2/4}|\det{A}|^{-1}d\vec{y}\nn\\
&\ = \  \left(\frac{\alpha}{4}\right)^{\frac{k}{2}}\frac{\Gamma\left(\frac{n}{2}\right)}{\Gamma\left(\frac{k+n}{2}\right)} \int_{\R^n} G\left(A^{-1}\vec{y}\right)e^{-\alpha |\vec{y}|^2/4}|\det{A}|^{-1}d\vec{y}\nn\\
&\ = \  \left(\frac{\alpha}{4}\right)^{\frac{k}{2}}\frac{\Gamma\left(\frac{n}{2}\right)}{\Gamma\left(\frac{k+n}{2}\right)} \int_{\R^n} G(\vec{x})e^{-\alpha \left(\vec{x}^{\top}B\vec{x}\right)/4}d\vec{x}.
\end{align}
\end{proof}

\section{Proof of Theorem \ref{thm:joint}}\label{sec:proofoftheoremjoint}
\noindent
The (order-symmetric) joint eigenvalue density for the $N \times N$ GOE / GUE / GSE ensembles are given by the formula
\be
\rho(\lambda, G_\beta(N,q)) \ = \  C'_\beta(N,q) e^{-\beta q|\lambda|^2/4}|\Delta(\lambda)|^{\beta}
\ee
(see Theorem 3.1 of \cite{Rez}).  Here $\lambda = (\lambda_1,\ldots, \lambda_N)$ is the vector of eigenvalues, $C'_\beta(N,q)$ is a computable constant, and
\be
\Delta(\lambda) \ = \  \prod_{i<j} \left(\lambda_j - \lambda_i\right).
\ee
In other words, for a symmetric integrable function $f$ of $N$ real variables,
\be
\int_{G_\beta(N,q)} f(\lambda(A))d_{G_\beta(N,q)}A \ = \  \int_{\R^N} f(\lambda) \rho_N(\lambda, G_\beta(N,q)) d\lambda,
\ee
with $\rho(\lambda, G_\beta(N,q))$ as above. As by Lemma \ref{lem:map} the spherical ensemble measure is the pushforward of the Gaussian ensemble measure under the map $A \to \frac{\sqrt{r}}{|A|} A$, we have
\begin{align}
\int_{S_\beta(N,r)} f(\lambda(A))d_{S_\beta(N,r)}A
&\ = \  \int_{G_\beta(N,q)} f\left(\lambda\left(\frac{r}{|A|} A\right)\right)d_{G_\beta(N,q)}A \nn\\
&\ = \  \int_{G_\beta(N,q)} f\left( \frac{r}{|A|} \lambda\left(A\right)\right)d_{G_\beta(N,q)}A \nn\\
&\ = \  \int_{G_\beta(N,q)} f\left( \frac{r}{|\lambda(A)|} \lambda\left(A\right)\right)d_{G_\beta(N,q)}A \nn\\
&\ = \  \int_{\R^N} f\left(\frac{r}{|\lambda|} \lambda\right) \rho(\lambda,G_\beta(N,q)) d\lambda.
\end{align}
Changing to spherical coordinates, we obtain
\be\label{eq:sj}
\int_{S_\beta(N,r)} f(\lambda(A))d_{S_\beta(N,r)}A \ = \  \int_{rS^{N-1}} \int_0^\infty f\left(\theta\right) \rho\left(t\theta, G_\beta(N,q)\right) rt^{N-1}dtd\theta,
\ee
where $d\theta$ denotes the volume form of $rS^{N-1}$ (called $d\vec{\theta}$ earlier, in the $r=1$ case).
As $\Delta$ is a homogeneous polynomial in the eigenvalues of degree ${N \choose 2}$, we have $\rho(t\theta, G_\beta(N,q))
\ = \  C'_\beta(N,q) e^{-\beta q|t \theta|^2/4}|\Delta(t \theta)|^\beta
= C'_\beta(N,q) e^{-\beta qr^2t^2/4}t^{\beta {N \choose 2}}|\Delta(\theta)|^\beta$.
Substituting into \eqref{eq:sj},
\begin{align}
\int_{S_\beta(N,r)} f(\lambda(A))d_{S_\beta(N,r)}A
&\ = \  \int_{rS^{N-1}} \int_0^\infty f\left(\theta\right) \left(C'_\beta(N,q) e^{-\beta qr^2t^2/4}t^{\beta {N \choose 2}}|\Delta(\theta)|^\beta\right) rt^{N-1}dtd\theta\nn\\
&\ = \  C'_\beta(N,q)r \left(\int_0^\infty  t^{n-1}e^{-\beta qr^2t^2/4} dt\right)\left(\int_{rS^{N-1}} f\left(\theta\right) |\Delta(\theta)|^\beta d\theta\right),
\end{align}
where (as usual) $n = \beta {N \choose 2} + N$.
The substitution $u=\beta qr^2t^2/4$ immediately gives the value $\frac{2^{n-1}}{\left(\beta qr^2\right)^{n/2}} \Gamma\left(\frac{n}{2}\right)$ for the first integral.  Thus,
\be\label{eqn:nodelta}
\int_{S_\beta(N,r)} f(\lambda(A))d_{S_\beta(N,r)}A
\ = \  C''_\beta(N,r)\int_{rS^{N-1}} f\left(\theta\right)|\Delta(\theta)|^\beta d\theta,
\ee
where the normalizing constant
\be
C''_\beta(N,r) \ = \  C'_\beta(N,q)r \frac{2^{n-1}}{\left(\beta qr^2\right)^{n/2}} \Gamma\left(\frac{n}{2}\right)
\ee
is independent of $q$.  This equality \eqref{eqn:nodelta} is equivalent to Theorem \ref{thm:joint}: The Dirac delta in Theorem \ref{thm:joint} specifies that the mass is concentrated on $rS^{N-1}$. \hfill $\Box$

\section{Proof of Theorem \ref{thm:momentsandsemicircle}}\label{sec:proofofomentsandsemicircle}

\begin{proof}[Proof of Theorem \ref{thm:momentsandsemicircle}] We start by proving the moment expansion, equation \eqref{eq:momentsandsemicircletheoremmomentformula}.
Applied to the moments of the empirical spectral density, Lemma \ref{lem:map} implies
\begin{align}\label{eq:convert}
m_k(S_\beta(N,r)) &\ = \  \frac{1}{N} \int_{S_\beta(N,r)} \Tr(A^k)d_{S_\beta(N,r)}A\nn\\
                         &\ = \  \frac{1}{N} \int_{G_\beta(N,q)} \Tr\left(\left(\frac{r}{|A|}A\right)^k\right) d_{G_\beta(N,q)}A\nn\\
                         &\ = \  \frac{1}{N} C_{\beta}(N,q)r^k\int_{G_\beta(N,q)} \frac{\Tr\left(A^k\right)}{|A|^k} e^{-\beta q |A|^2/4}dA.
\end{align}
By Lemma \ref{lem:integration}, we may rewrite the above as
\be
m_k(S_\beta(N,r))
\ = \
\left(\frac{\beta q}{4}\right)^{\frac{k}{2}}\frac{\Gamma\left(\frac{n}{2}\right)}{\Gamma\left(\frac{k+n}{2}\right)}
\frac{1}{N} C_{\beta}(N,q)r^k\int_{G_\beta(N,q)} \Tr\left(A^k\right) e^{-\beta q |A|^2/4}dA,
\ee
where $n = \dim_\R G_\beta(N) = \beta {N \choose 2} + N$ (and we are using the fact that the norm of the Frobenius inner product is a real inner product). Thus,
\be
m_k(S_\beta(N,r))
\ = \
\left(\frac{\beta}{4} q r^2\right)^{\frac{k}2} \frac{\Gamma\left(\frac{n}{2}\right)}{\Gamma\left(\frac{k+n}{2}\right)}
m_k(G_\beta(N,q)),
\ee
as desired.

We now turn to the second part of the theorem, the convergence to the semi-circle. The eigenvalue density of $G_\beta(N,N)$ has mean zero and variance $1+\left(\frac{2}{\beta}-1\right)\frac{1}{N} = 1 + o(1)$.  The correct scaling for the spherical ensemble is to take $r = \sqrt{N}$, as this will lead to eigenvalues of size on the order of 1.  By \eqref{eq:momentsandsemicircletheoremmomentformula} we find
\begin{align}
m_k(S_\beta(N,\sqrt{N}))
&\ = \  \left(\frac{\beta}{4} N \sqrt{N}^2\right)^{\frac{k}2} \frac{\Gamma\left(\frac{n}{2}\right)}{\Gamma\left(\frac{k+n}{2}\right)} m_k(G_\beta(N,N))\nn\\
&\ = \  \left(\frac{\beta}{4} N^2\right)^{\frac{k}2} \frac{\Gamma\left(\frac{n}{2}\right)}{\Gamma\left(\frac{k+n}{2}\right)} m_k(G_\beta(N,N)),
\end{align}
where $n = \beta {N \choose 2} + N = \frac{N(\beta N+(2-\beta))}{2}$.  In particular, $S_\beta(N,\sqrt{N})$ has mean zero and variance
\begin{align}
m_2(S_\beta(N,\sqrt{N}))
&\ = \  \left(\frac{\beta}{4} N^2\right) \frac{\Gamma\left(\frac{n}{2}\right)}{\Gamma\left(1+\frac{n}{2}\right)} \left(1+\left(\frac{2}{\beta}-1\right)\frac{1}{N}\right)\nn\\
&\ = \  \left(\frac{\beta}{4} N^2\right) \left(\frac{1}{n/2}\right) \left(\frac{\beta N + (2-\beta)}{\beta N}\right)\nn\\
&\ = \  \left(\frac{\beta N^2}{4}\right) \left(\frac{4}{N(\beta N+(2-\beta))}\right) \left(\frac{\beta N + (2-\beta)}{\beta N}\right)\nn\\
&\ = \  1.
\end{align}
As $N \to \infty$, the asymptotic relation $\Gamma(z+c)/\Gamma(z) \sim z^c$ for $|z| \to \infty$ implies the asymptotic equality
\begin{align}
m_k(S_\beta(N,\sqrt{N}))
&\ \sim\  \left(\frac{\beta}{4} N^2\right)^{\frac{k}2} \left(\frac{n}{2}\right)^{-\frac{k}{2}} m_k(G_\beta(N,N))\nn\\
&\ \sim\  \left(\frac{\beta}{4} N^2\right)^{\frac{k}2} \left(\frac{\beta N^2}{4}\right)^{-\frac{k}{2}} m_k(G_\beta(N,N))\nn\\
&\ \sim\  m_k(G_\beta(N,N)).
\end{align}
In other words, if $C_\ell$ denotes the $\ell^{\rm th}$ Catalan number, then
\be
\twocase{\lim_{N \to \infty} m_k(S_\beta(N,\sqrt{N}))  \ = \  \lim_{N \to \infty} m_k(G_\beta(N,N)) \ = \  }{C_{k/2}}{if $k$ is even,}{0}{if $k$ is odd,} \ee
which by the Method of Moments (see \cite{Bi,Ta}) implies that, in the large $N$ limit, the empirical spectral distribution for a spherical ensemble converges in measure to the semicircle.  (See \cite{Rez}, Remark 2.2 for the fact that the limiting moments of Gaussian ensembles---and, more generally, Wigner ensembles---are Catalan numbers.)
\end{proof}

\section{Proof of Theorem \ref{thm:characteristicdensity}}\label{sec:proofoftheoremcharacteristicdensity}

\noindent
We first prove the expansion for the characteristic function in Theorem  \ref{thm:characteristicdensity}, and then we derive the claimed formula for the empirical spectral density.

The moments of the GUE are well-known.  The moment $m_2(S_\beta(N,r))$ vanishes if $k$ is odd.  For $k=2\ell$ we have the following expression from Harer and Zagier \cite{HZ}:
\be\label{eq:expansionGtwoNq}
m_{2\ell}(G_2(N,q)) \ = \  \frac{1}{N}q^{-\ell}(2\ell-1)!!c(\ell,N),
\ee
where $c(\ell, N)$ are power series coefficients for a particular rational function,
\be\label{eq:rat}
\sum_{\ell=0}^\infty c(\ell,N)z^\ell \ = \  \frac{1}{2z}\left(\left(\frac{1+z}{1-z}\right)^N-1\right),
\ee
and $(2\ell-1)!! = (2\ell-1)(2\ell-3)\cdots 3 \cdot 1$ is the $2\ell^{\rm th}$ moment of the standard normal. Substituting \eqref{eq:expansionGtwoNq} into \eqref{eq:momentsandsemicircletheoremmomentformula} yields
\begin{align}
m_{2\ell}(S_2(N,r)) &\ = \  \frac{1}{N}\left(\frac{r^2}{2}\right)^\ell \frac{\Gamma\left(\frac{n}{2}\right)}{\Gamma\left(\frac{n}{2}+\ell\right)} (2\ell-1)!! c(\ell,N),
\end{align}
and we may therefore write the characteristic function as
\begin{align}
\phi(t,S_2(N,r)) &\ = \  \sum_{\ell = 0}^\infty \left(\frac{1}{N}\left(\frac{r^2}{2}\right)^\ell \frac{\Gamma\left(\frac{n}{2}\right)}{\Gamma\left(\frac{n}{2}+\ell\right)} (2\ell-1)!! c(\ell,N)\right) \frac{(it)^{2\ell}}{(2\ell)!} \nn\\
&\ = \  \frac{1}{N} \sum_{\ell = 0}^\infty \frac{\Gamma\left(\frac{n}{2}\right)}{\Gamma\left(\frac{n}{2}+\ell\right) \ell!} c(\ell,N) \left(-\frac{1}{4} (rt)^2\right)^\ell.
\end{align}
For convenience, fix $N$ and $r$, and set
\be
\psi_N(u) \ := \  \frac{1}{N} \sum_{\ell = 0}^\infty \frac{\Gamma\left(\frac{n}{2}\right)}{\Gamma\left(\frac{n}{2}+\ell\right) \ell!} c(\ell,N) u^\ell,
\ee
so that $\psi_N\left(-\frac{1}{4} (rt)^2\right)=\phi(t,S_2(N,r))$.  The coefficients of this power series are a product of more familiar power series coefficients:
\be
\sum_{\ell=0}^\infty \frac{\Gamma\left(\frac{n}{2}\right)}{\Gamma\left(\frac{n}{2}+\ell\right) \ell!} z^\ell \ = \  \hf\left(\frac{n}{2},z\right),
\ee
where $\hf$ is a confluent hypergeometric limit function, and by definition
\be
\sum_{\ell=0}^\infty c(\ell,N)z^\ell \ = \  B(z)
\ee
where $B(z)$ is the rational function on the right hand side of \eqref{eq:rat}. From contour integration, we find
\be\label{eq:contourintegralforpsiNu}
\psi_N(u) \ = \  \frac{1}{2\pi i N}\oint_{|z|=2} \hf\left(\frac{n}{2},uz\right)B(z^{-1})\frac{dz}{z}.
\ee
We evaluate the contour integral by expanding both functions at $z=1$, which is the location of the pole of $B(z^{-1})$.  The expansion of $B$ is easy:
\begin{align}
B(z^{-1})\frac{dz}{z} \ = \  \foh\left(\left(1+\frac{2}{z-1}\right)^N-1\right) dz\ = \  \foh\sum_{\ell=1}^N {N \choose \ell} \left(\frac{2}{z-1}\right)^\ell dz.
\end{align}
The expansion of $\hf$ is more involved but elementary:
\begin{align}
_0F_1\left(\frac{n}{2}, uz\right) &\ = \  \sum_{\ell = 0}^\infty \frac{\Gamma\left(\frac{n}{2} \right)}{\Gamma\left(\frac{n}{2} + \ell\right)\ell!} (uz)^\ell \nn\\
&\ = \  \sum_{\ell=0}^\infty \frac{\Gamma\left(\frac{n}{2}\right)}{\Gamma\left(\frac{n}{2} + \ell\right)\ell!} u^\ell \left((z-1) + 1\right)^\ell \nn\\
&\ = \  \sum_{\ell = 0}^\infty \frac{\Gamma\left(\frac{n}{2}\right)}{\Gamma\left(\frac{n}{2} + \ell\right)\ell!} u^\ell \sum_{j=0}^\infty {\ell \choose j} (z-1)^j \nn\\
&\ = \  \sum_{j=0}^\infty  \frac{\Gamma\left(\frac{n}{2}\right)}{j!} \left( \sum_{\ell = j}^\infty \frac{1}{\Gamma\left(\frac{n}{2} + \ell\right)  (\ell - j)!} u^\ell \right) (z-1)^j \nn\\
&\ = \  \sum_{j=0}^\infty  \frac{\Gamma\left(\frac{n}{2}\right)}{\Gamma\left(\frac{n}{2} + j\right) j!} \left( \sum_{\ell' = 0}^\infty \frac{\Gamma\left(\frac{n}{2} + j\right)}{\Gamma\left(\frac{n}{2} + j+\ell'\right)  \ell'!} u^{\ell'+j} \right) (z-1)^j \nn\\
&\ = \  \sum_{j=0}^\infty  \frac{\Gamma\left(\frac{n}{2}\right)}{\Gamma\left(\frac{n}{2} + j\right) j!}  {}_0F_1\left(\frac{n}{2}+j,u\right) u^j (z-1)^j.
\end{align}
By the residue theorem, the only surviving terms after the integration are those with $j-\ell=-1$.  Precisely, the integral in \eqref{eq:contourintegralforpsiNu} evaluates to
\begin{align}
\psi_N(u) &\ = \  \frac{1}{2N} \sum_{j=0}^{N-1} {N \choose j+1} 2^{j+1} \frac{\Gamma\left(\frac{n}{2}\right)}{\Gamma\left(\frac{n}{2} + j\right) j!}  \hf\left(\frac{n}{2}+j,u\right) u^j\nn\\
&\ = \  \frac{1}{N} \sum_{j=0}^{N-1} {N \choose j+1} \frac{\Gamma\left(\frac{n}{2}\right)}{\Gamma\left(\frac{n}{2} + j\right) j!}  \hf\left(\frac{n}{2}+j,u\right) (2u)^j,
\end{align} and thus
\begin{align}
\phi(t,S_2(N,r)) &\ = \  \psi_N\left(-\fof (rt)^2\right)\nn\\
                        &\ = \  \frac{1}{N} \sum_{j=0}^{N-1}  {N \choose j+1} \frac{\Gamma\left(\frac{n}{2}\right)}{\Gamma\left(\frac{n}{2} + j\right) j!}  \hf\left(\frac{n}{2}+j,\fof (rt)^2\right) \left(-\foh (rt)^2\right)^j.
\end{align}
The expression in terms of Bessel functions in Theorem \ref{thm:characteristicdensity} follows immediately, using the formula (from \cite{AS}, p. 362)
\be
J_\alpha(t) \ = \  \frac{(t/2)^\alpha}{\Gamma(\alpha+1)}\hf\left(\alpha+1,-\fof t^2\right).
\ee
This completes the proof of the first part of Theorem \ref{thm:characteristicdensity}.

To compute the density function, we continue to use confluent hypergeometric limit functions because they have simple Fourier transforms.
\begin{lem}\label{lem:hfFourier}
The Fourier transform of $\hf\left(\alpha,-\fof t^2\right)$ is given by
\be
\frac{1}{2\pi}\int_{-\infty}^{\infty} e^{-i tx}\hf\left(\alpha,-\fof t^2\right)\,dt \ = \  \foh {\alpha -1 \choose \foh}\left(1-x^2\right)^{\alpha-\frac32},
\ee where the binomial coefficient is extended so that ${w \choose z} := \frac{\Gamma(w+1)}{\Gamma(z+1)\Gamma(w-z+1)}$, a meromorphic function of two complex variables $w$ and $z$.
\end{lem}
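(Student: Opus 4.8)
The plan is to transfer the claim to a finite integral via Fourier inversion. Using the relation to Bessel functions, $\hf\!\left(\alpha,-\fof t^2\right)=\Gamma(\alpha)\,2^{\alpha-1}\,t^{1-\alpha}J_{\alpha-1}(t)$, the asymptotic $J_{\alpha-1}(t)=O(t^{-1/2})$ shows that $\hf(\alpha,-\fof t^2)$ is even and, for $\Re\alpha>\tfrac32$, integrable in $t$; likewise $(1-x^2)^{\alpha-3/2}$, read as $0$ for $|x|>1$, is even and integrable for $\Re\alpha>\tfrac12$. Hence, by uniqueness of Fourier transforms, the lemma is equivalent to the identity
\[
\hf\!\left(\alpha,-\fof t^2\right)\;=\;\foh\binom{\alpha-1}{\foh}\int_{-1}^{1}(1-x^2)^{\alpha-\frac32}\cos(tx)\,dx ,
\]
so the first task is just to evaluate this integral.

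Next I would expand $\cos(tx)=\sum_{\ell\ge 0}(-1)^\ell t^{2\ell}x^{2\ell}/(2\ell)!$ and integrate term by term; this is legitimate for $\Re\alpha>\tfrac12$, since the integral converges absolutely and the power series converges uniformly on $[-1,1]$. The substitution $u=x^2$ turns each term into a Beta integral,
\[
\int_{-1}^{1}x^{2\ell}(1-x^2)^{\alpha-\frac32}\,dx\;=\;B\!\left(\ell+\tfrac12,\,\alpha-\tfrac12\right)\;=\;\frac{\Gamma\!\left(\ell+\tfrac12\right)\Gamma\!\left(\alpha-\tfrac12\right)}{\Gamma(\ell+\alpha)} .
\]
Then I would simplify the constants using $\binom{\alpha-1}{1/2}=\Gamma(\alpha)\big/\!\left(\Gamma(\tfrac32)\Gamma(\alpha-\tfrac12)\right)$ with $\Gamma(\tfrac32)=\sqrt\pi/2$, and the half-integer value $\Gamma(\ell+\tfrac12)=(2\ell)!\,\sqrt\pi/(4^\ell\ell!)$. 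These make the $\Gamma(\alpha-\tfrac12)$, $\sqrt\pi$, and $(2\ell)!$ factors cancel, collapsing the right-hand side to $\sum_{\ell\ge 0}\frac{\Gamma(\alpha)}{\Gamma(\ell+\alpha)\,\ell!}\left(-\fof t^2\right)^\ell$, which is $\hf(\alpha,-\fof t^2)$ by the definition of ${}_0F_1$. Finally, the identity having been established for $\Re\alpha>\tfrac32$ — which is all that is used in \S\ref{sec:proofoftheoremcharacteristicdensity}, where $\alpha=\tfrac{N^2-1}{2}+j$ — I would note that it persists for $\Re\alpha>\tfrac12$ by analytic continuation, both sides being holomorphic in $\alpha$ on that half-plane.

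The only genuine bookkeeping is the half-integer Gamma arithmetic needed to pin down the constant $\foh\binom{\alpha-1}{1/2}$; the convergence, integrability, and analyticity points are routine and I would dispatch them quickly. An alternative route sidesteps the term-by-term integration entirely by invoking Poisson's integral representation $J_\nu(t)=\frac{(t/2)^\nu}{\sqrt\pi\,\Gamma(\nu+\frac12)}\int_{-1}^{1}(1-s^2)^{\nu-\frac12}e^{ist}\,ds$ (valid for $\Re\nu>-\tfrac12$) with $\nu=\alpha-1$, combined with $\hf(\alpha,-\fof t^2)=\Gamma(\alpha)2^{\alpha-1}t^{1-\alpha}J_{\alpha-1}(t)$; but this merely conceals the same Beta integral inside Poisson's formula, so I would present the direct computation as the main argument.
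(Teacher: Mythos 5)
Your proof is correct, but it takes a genuinely different route from the paper. The paper's proof is an ODE argument: writing $w(t) = \hf\left(\alpha,-\fof t^2\right)$, it observes from the power series that $tw''+(2\alpha-1)w'+tw=0$, Fourier-transforms this into the first-order equation $(1-x^2)\hat w'(x)+(2\alpha-3)x\hat w(x)=0$, solves to get $\hat w(x)=C(1-x^2)^{\alpha-3/2}$, and then pins down $C$ by citing tabulated integrals of Bessel functions from \cite{AS}. You instead compute the inverse transform of $(1-x^2)^{\alpha-3/2}$ directly, expanding $\cos(tx)$ and reducing each term to a Beta integral $B\left(\ell+\tfrac12,\alpha-\tfrac12\right)$; the half-integer Gamma identities then reassemble the $\hf$ series, which is in effect a self-contained derivation of Poisson's integral for $J_{\alpha-1}$. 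Your calculation checks out (the Beta integral, the value $\Gamma(\ell+\tfrac12)=(2\ell)!\sqrt{\pi}/(4^\ell\ell!)$, and the cancellations are all right), and it has the advantage of producing the normalizing constant $\foh{\alpha-1\choose\foh}$ as part of the computation rather than by a separate table lookup; the paper's ODE route avoids series manipulation but must justify the transform rules and the constant separately. One small caveat: your closing remark that the identity ``persists for $\Re\alpha>\tfrac12$ by analytic continuation'' is not quite right as stated, since the left-hand side of the lemma is a literal integral that converges absolutely only for $\Re\alpha>\tfrac32$ (where $w(t)=O(t^{1/2-\alpha})$); but as you note, the application in \S\ref{sec:proofoftheoremcharacteristicdensity} uses only $\alpha=\tfrac{n}{2}+j\geq 2$, so nothing is lost.
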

\begin{proof}
Fix $\alpha$, and let $w(t) = \hf\left(\alpha,-\fof t^2\right)$.  From the power series expansion of $w$, it's easy to check that $w$ satisfies the differential equation
\begin{equation}
tw''(t)+(2\alpha - 1)w'(t)+tw(t)=0.
\end{equation}
Taking the Fourier transform of both sides, and using the rules
\begin{align}
\fF\{tg(t)\}&=i\hat{g}'(x),\\
\fF\{g'(t)\}&=(ix)\hat{g}(x),
\end{align}
we obtain the following differential equation for $\hat{w}$:
\begin{equation}
(1-x^2)\hat{w}'(x)+(2\alpha-3)x\hat{w}(x)=0.
\end{equation}
In other words, $\frac{d}{dx}\left(\log\hat{w}(x)\right) = (\alpha-\frac32)\frac{-2x}{1-x^2}$, so $\hat{w}(x)=C(1-x^2)^{\alpha-\frac32}$ for some constant $C$.

The formula
\begin{equation}
C = \frac{1}{2\pi}\int_{-\infty}^{\infty} \hf\left(\alpha,-\fof t^2\right)\,dt = \foh {\alpha -1 \choose \foh}
\end{equation}
follows from the formulas for the integrals of Bessel functions in Chapter 11 of \cite{AS}.
\end{proof}

It follows from Lemma \ref{lem:hfFourier} that
\begin{align}
\frac{1}{2\pi}\int_{-\infty}^{\infty} e^{-i tx}\hf\left(\frac{n}{2}+j,-\fof t^2\right)\left(-\foh t^2\right)^j\,dt
&\ = \  \frac{1}{2^{j+1}} {\frac{n}{2}+j-1 \choose \foh}\left(\frac{d}{dx}\right)^{2j}\left(1-x^2\right)^{\frac{n-3}{2}+j} \end{align}
and
\begin{eqnarray} & & \frac{1}{2\pi}\int_{-\infty}^{\infty} e^{-i tx}\hf\left(\frac{n}{2}+j,-\fof (rt)^2\right)\left(-\foh (rt)^2\right)^j\,dt
\nonumber\\ & &\ \ \ \ \ \ \ = \  \frac{1}{2^{j+1}r} {\frac{n}{2}+j-1 \choose \foh}\left(\frac{d}{d(x/r)}\right)^{2j}\left(1-(x/r)^2\right)^{\frac{n-3}{2}+j} \nonumber\\ & &\ \ \ \ \ \ \ = \
\frac{1}{2^{j+1} r^{n-2}}{\frac{n}{2}+j-1 \choose \foh}\left(\frac{d}{dx}\right)^{2j}\left(r^2-x^2\right)^{\frac{n-3}{2}+j}.
\end{eqnarray}

From Fourier inversion we finally obtain
\begin{align}
f\left(x,S_2(N,r)\right)
&\ = \  \frac{1}{2\pi}\int_{-\infty}^{\infty} e^{-i tx}\phi(t,S_2(N,r))\,dt \nn\\
&\ = \  \frac{1}{N}\sum_{j=0}^{N-1} {N \choose j+1} \frac{\Gamma\left(\frac{n}{2}\right)}{\Gamma\left(\frac{n}{2} + j\right) j!}\frac{1}{2^{j+1} r^{n-2}}{\frac{n}{2}+j-1 \choose \foh}\left(\frac{d}{dx}\right)^{2j}\left(r^2-x^2\right)^{\frac{n-3}{2}+j} \nn\\
&\ = \  \frac{1}{r^{n-2}N}\sum_{j=0}^{N-1} {N \choose j+1} \frac{\Gamma\left(\frac{n}{2}\right)}{2^{j}\sqrt{\pi}\Gamma\left(\frac{n-1}{2}+j\right) j!}\left(\frac{d}{dx}\right)^{2j}\left(r^2-x^2\right)^{\frac{n-3}{2}+j},
\end{align}
and substituting $n=N^2$ completes the proof of Theorem \ref{thm:characteristicdensity}. \hfill $\Box$

In Figures \ref{fig:DensityPlots468} and \ref{fig:DensityPlots91625}, we plot the density $f(x,S_2(N,\sqrt{N}))$ for various values of $N$. (Note the polynomial expansions are valid for all $N$, though if $N$ is a perfect square, then $r = \sqrt{N}$ is an integer.) The convergence to the semicircle is quite rapid, and is similar to the convergence to the semicircle found in \cite{KKMSX} for block circulant ensembles.

\begin{figure}
\begin{center}
\scalebox{.58}{\includegraphics{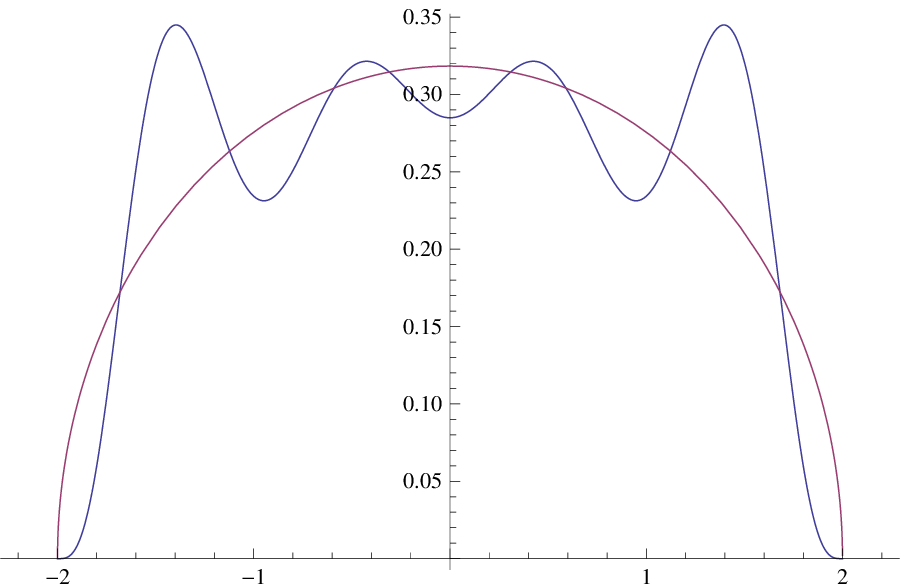}}\ \scalebox{.58}{\includegraphics{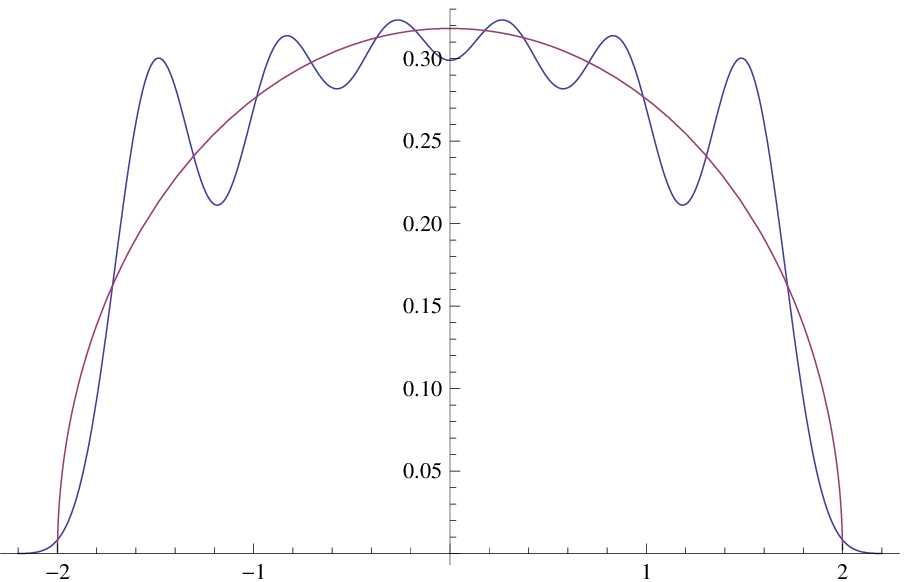}} \ \scalebox{.58}{\includegraphics{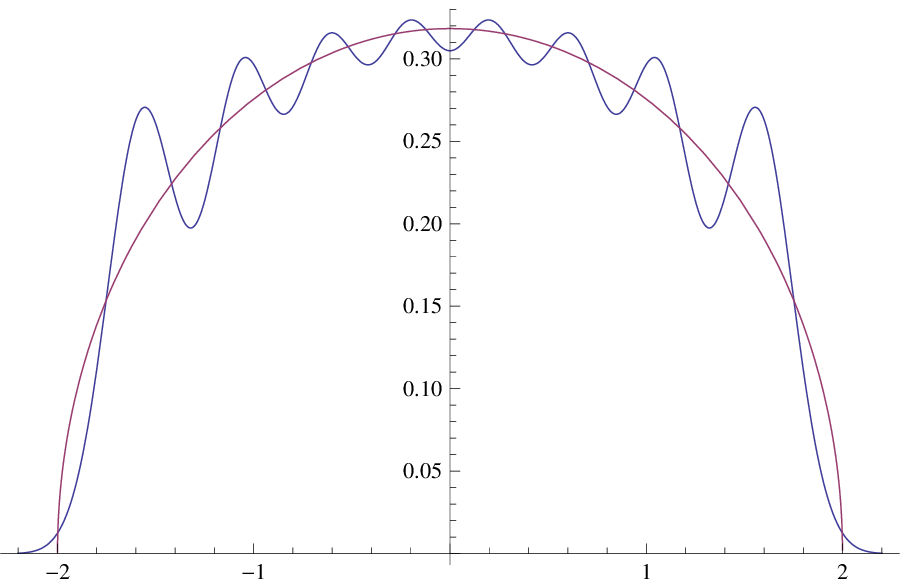}}
\caption{\label{fig:DensityPlots468} Plots of the density $f(x,S_2(N,\sqrt{N}))$ for $N = 4, 6$ and $8$.}
\end{center}\end{figure}

\begin{figure}
\begin{center}
\scalebox{.58}{\includegraphics{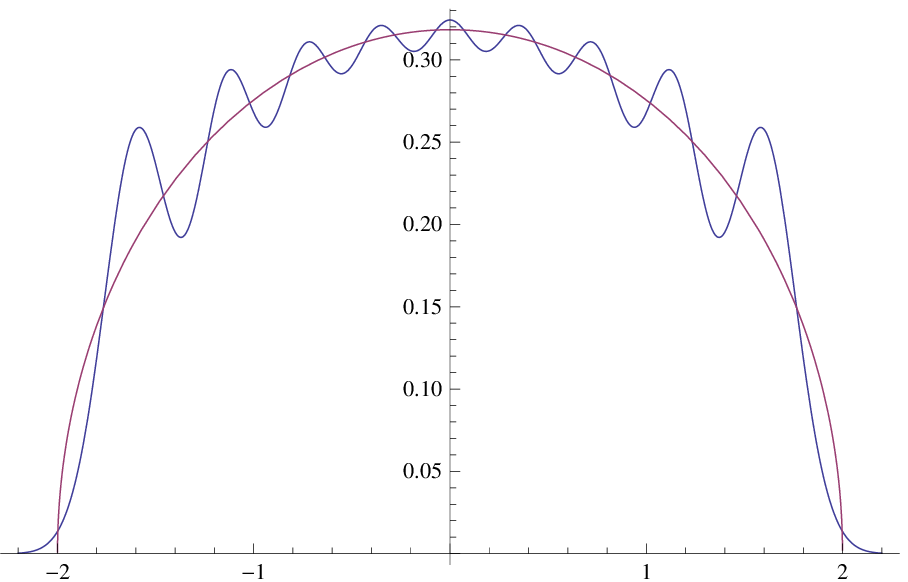}}\ \scalebox{.58}{\includegraphics{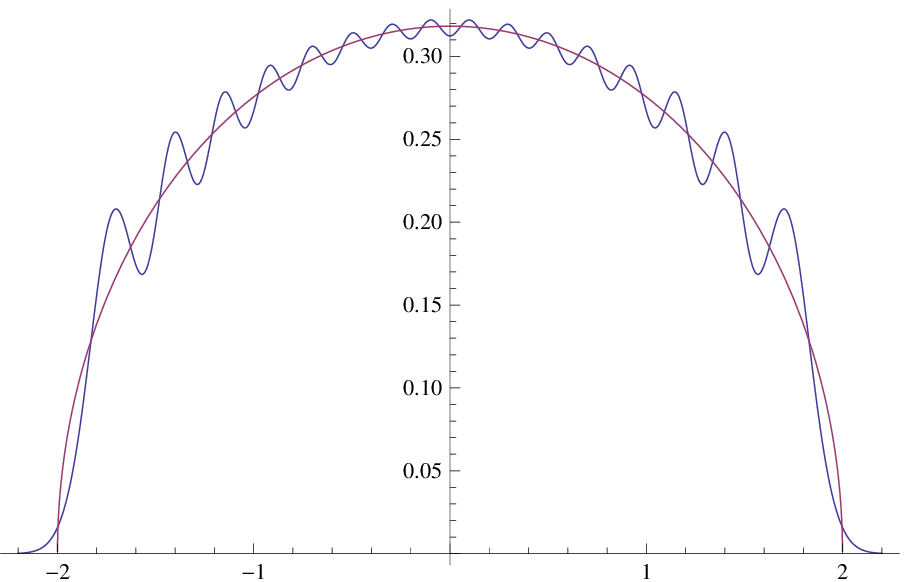}} \ \scalebox{.58}{\includegraphics{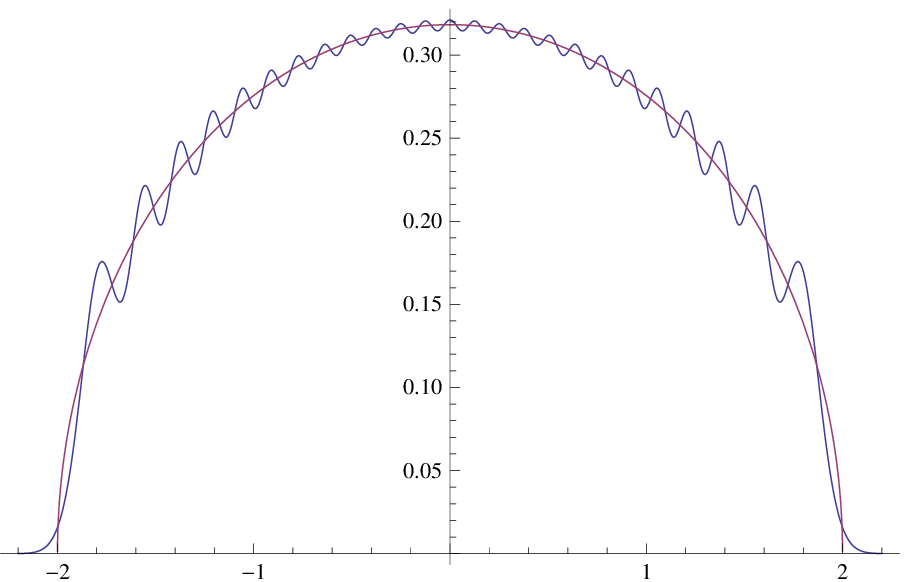}}
\caption{\label{fig:DensityPlots91625} Plots of the density $f(x,S_2(N,\sqrt{N}))$ for $N = 9, 16$ and $25$.}
\end{center}\end{figure}





\section{Spacing Statistics \label{sec:spacings}}

%

\noindent
We conclude with a statistical observation. The spacing density of the SUE appears to approach the same large $N$ limit as that of the GUE.  This is not surprising, because spacing densities are considered to be very robust.  For example, the thin ensemble of real symmetric Toeplitz matrices appears to have GOE spacings, even though its limiting empirical density is far from semicircular \cite{HM}.  Spacing statistics for the SUE and GUE are compared in Figure \ref{fig:SpacingPlots}.

\begin{figure}
\begin{center}
\scalebox{.58}{\includegraphics{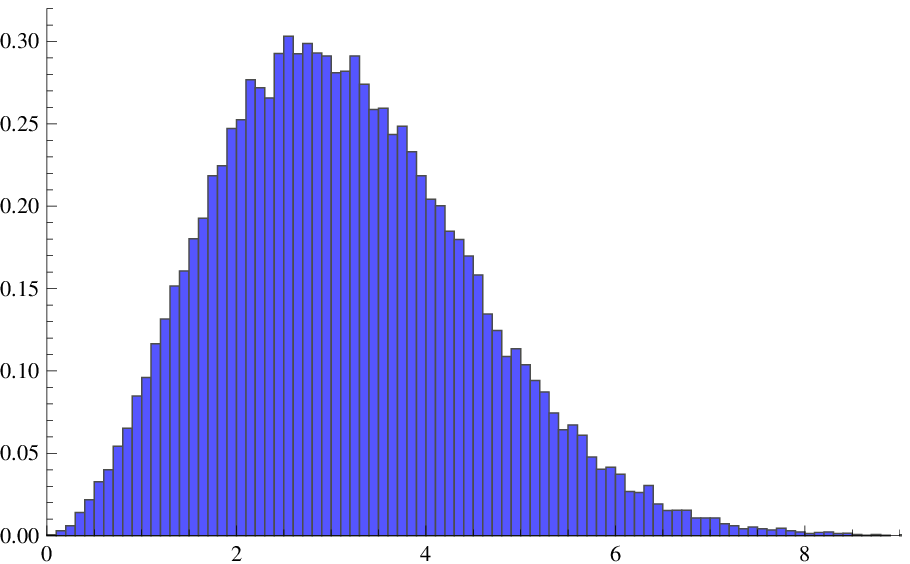}}\ \scalebox{.58}{\includegraphics{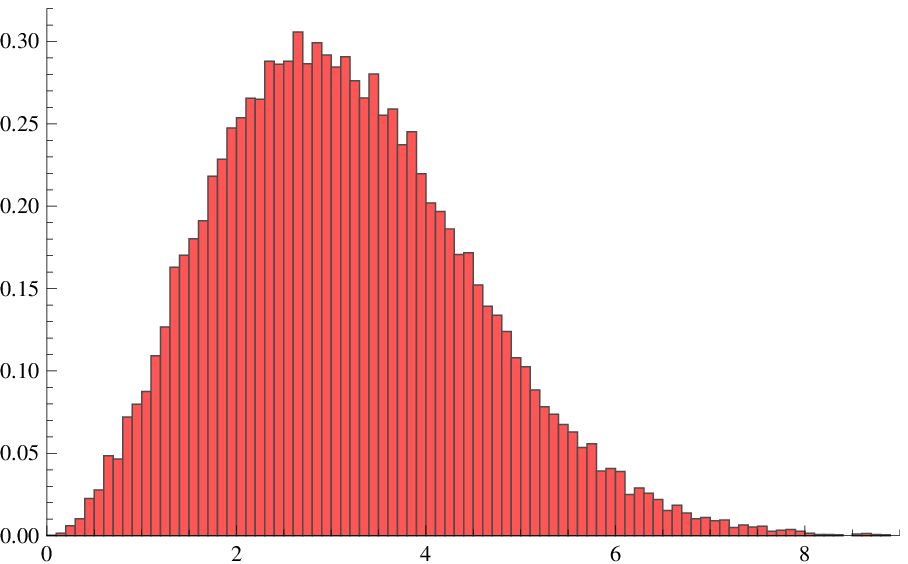}} \ \scalebox{.58}{\includegraphics{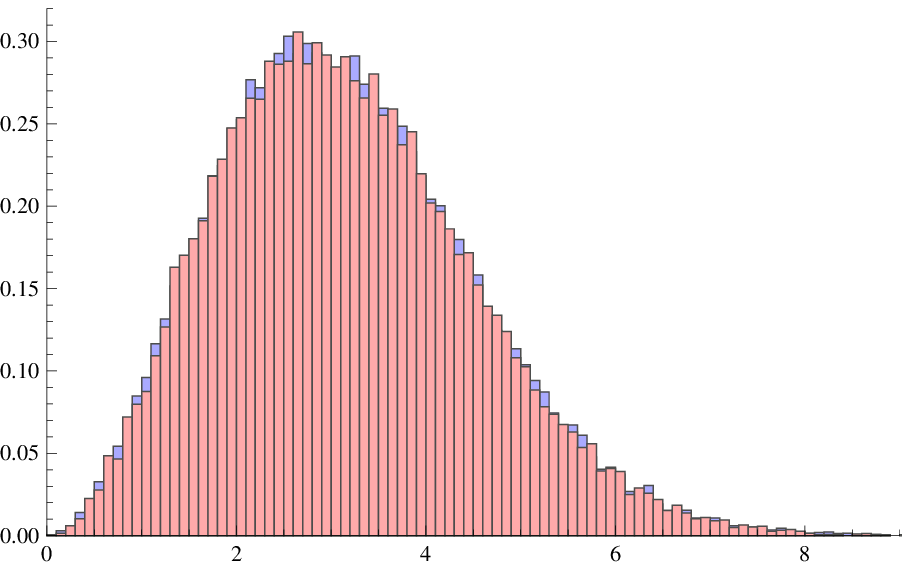}}
\caption{\label{fig:SpacingPlots} Histograms showing spacing statistics for the SUE, GUE, and the SUE and GUE together, respectively.  For both ensembles, $2000$ matrices of size $100 \times 100$ were sampled, and the $20$ consecutive spacings of the middle $21$ eigenvalues were computed.}
\end{center}\end{figure}



\ \\

\end{document}